\newcommand{\prob}[1]{{\rm Prob}\left(#1\right)}
\newcommand{\p}[1]{{\rm Prob}\left(#1\right)}
\newcommand{\E}{\mathbb{E}}
\newcommand{\bburl}[1]{\textcolor{blue}{\url{#1}}}
\newcommand{\V}{\text{{\rm Var}}}
\renewcommand{\E}{\mathbb{E}}
\numberwithin{equation}{section}
\newtheorem{thm}{Theorem}[section]
\newtheorem{lem}[thm]{Lemma}
\newtheorem{exa}[thm]{Example}
\newtheorem{defi}[thm]{Definition}
\theoremstyle{plain}
\newcommand\be{\begin{equation}}
\newcommand\ee{\end{equation}}
\newcommand\bea{\begin{eqnarray}}
\newcommand\eea{\end{eqnarray}}
\newcommand\bi{\begin{itemize}}
\newcommand\ei{\end{itemize}}
\newcommand\ben{\begin{enumerate}}
\newcommand\een{\end{enumerate}}
\newcommand\bc{\begin{center}}
\newcommand\ec{\end{center}}
\newcommand\ba{\begin{array}}
\newcommand\ea{\end{array}}
\renewcommand{\l}{\ell}
\newcommand{\e}{\epsilon}
\newcommand{\Z}{\ensuremath{\mathbb{Z}}}
\renewcommand{\bf}{\textbf}
\newcommand{\N}{\mathbb{N}}
\newcommand\frakfamily{\usefont{U}{yfrak}{m}{n}}
\DeclareTextFontCommand{\textfrak}{\frakfamily}
\newcommand{\twocase}[5]{#1 \begin{cases} #2 & \text{{\rm #3}}\\ #4 &\text{{\rm #5}} \end{cases}}
\newcommand{\hr}[1]{\href{#1}{\url{#1}}}
\title{Benford Behavior of Generalized Zeckendorf Decompositions}
\author{Andrew Best}
\email{\textcolor{blue}{\href{mailto:ajb5@williams.edu)}{ajb5@williams.edu}}}
\address{Department of Mathematics and Statistics, Williams College, Williamstown, MA 01267}
\author{Patrick Dynes}
\email{\textcolor{blue}{\href{mailto:pdynes@clemson.edu)}{pdynes@clemson.edu}}}
\address{Department of Mathematical Sciences, Clemson University, Clemson, SC 29634}
\author{Xixi Edelsbrunner}
\email{\textcolor{blue}{\href{mailto:xe1@williams.edu}{xe1@williams.edu}}}
\address{Department of Mathematics and Statistics, Williams College, Williamstown, MA 01267}
\author{Brian McDonald}
\email{\textcolor{blue}{\href{mailto:bmcdon11@u.rochester.edu}{bmcdon11@u.rochester.edu}}}
\address{Department of Mathematics, University of Rochester, Rochester, NY 14627}
\author{Steven J. Miller}
\email{\textcolor{blue}{\href{mailto:sjm1@williams.edu, Steven.Miller.MC.96@aya.yale.edu}{sjm1@williams.edu,Steven.Miller.MC.96@aya.yale.edu} }}
\address{Department of Mathematics and Statistics, Williams College, Williamstown, MA 01267}
\author{Kimsy Tor}
\email{\textcolor{blue}{\href{mailto:ktor.student@manhattan.edu}{ktor.student@manhattan.edu}}}
\address{Department of Mathematics, Manhattan College, Riverdale, NY 10471}
\author{Caroline Turnage-Butterbaugh}
\email{\textcolor{blue}{\href{mailto:cturnagebutterbaugh@gmail.com}{cturnagebutterbaugh@gmail.com}}}
\address{Department of Mathematics, North Dakota State University, Fargo, ND 58108}
\author{Madeleine Weinstein}
\email{\textcolor{blue}{\href{mailto:mweinstein@hmc.edu}{mweinstein@hmc.edu}}}
\address{Department of Mathematics, Harvey Mudd College, Claremont, CA 91711 }
\thanks{This research was conducted as part of the 2014 SMALL REU program at Williams College and was supported by NSF grants DMS 1347804 and DMS 1265673, Williams College, and the Clare Boothe Luce Program of the Henry Luce Foundation. It is a pleasure to thank them for  their support, and the participants at SMALL and at the 16\textsuperscript{th} International Conference on Fibonacci Numbers and their Applications for helpful discussions.}
\subjclass[2010]{11B39, 11B05, 60F05  (primary) 11K06, 65Q30, 62E20 (secondary)}
\keywords{Zeckendorf decompositions, Fibonacci numbers, positive linear recurrence relations, Benford's law}
\date{\today}
\begin{document}

\begin{abstract} We prove connections between Zeckendorf decompositions and Benford's law. Recall that if we define the Fibonacci numbers by $F_1 = 1, F_2 = 2$ and $F_{n+1} = F_n  + F_{n-1}$, every positive integer can be written uniquely as a sum of non-adjacent elements of this sequence; this is called the Zeckendorf decomposition, and similar unique decompositions exist for sequences arising from recurrence relations of the form $G_{n+1}=c_1G_n+\cdots+c_LG_{n+1-L}$ with $c_i$ positive and some other restrictions. Additionally, a set $S \subset \Z$ is said to satisfy Benford's law base 10 if the density of the elements in $S$ with leading digit $d$ is $\log_{10}{(1+\frac{1}{d})}$; in other words, smaller leading digits are more likely to occur. We prove that as $n\to\infty$ for a randomly selected integer $m$ in $[0, G_{n+1})$ the distribution of the leading digits of the summands in its generalized Zeckendorf decomposition converges to Benford's law almost surely. Our results hold more generally: one obtains similar theorems to those regarding the distribution of leading digits when considering how often values in sets with density are attained in the summands in the decompositions.
\end{abstract}

\maketitle
\tableofcontents

%%%%%%%%%%%%%%%%%%%%%%%%%%%%%%%%%%%%%%%%%%%%%%%%%%%%%%%%%%%%%%%%%%%%%%%%%%%%%%%%%%%%%
%%%%%%%%%%%%%%%%%%%%%%%%%%%%%%%%%%%%%%%%%%%%%%%%%%%%%%%%%%%%%%%%%%%%%%%%%%%%%%%%%%%%%
%%%%%%%%%%%%%%%%%%%%%%%        Section I: Introduction    %%%%%%%%%%%%%%%%%%%%%%%%%%%
%%%%%%%%%%%%%%%%%%%%%%%%%%%%%%%%%%%%%%%%%%%%%%%%%%%%%%%%%%%%%%%%%%%%%%%%%%%%%%%%%%%%%
%%%%%%%%%%%%%%%%%%%%%%%%%%%%%%%%%%%%%%%%%%%%%%%%%%%%%%%%%%%%%%%%%%%%%%%%%%%%%%%%%%%%%
\section{Introduction}

Zeckendorf's theorem states that every positive integer $m$ can be written uniquely as a sum of nonconsecutive Fibonacci numbers, where the Fibonacci numbers are defined by $F_{n+1} = F_n+ F_{n-1}$ with $F_1=1$ and $F_2=2$ (we must re-index the Fibonaccis, as if we included 0 or had two 1's we clearly could not have uniqueness). Such a sum is called the Zeckendorf decomposition of $m$. Zeckendorf decompositions have been generalized to many other sequences, specifically those arising from positive linear recurrences. More generally, we can consider a positive linear recurrence sequence given by \be G_{n+1}\ =\ c_1G_n + \cdots + c_LG_{n+1-L}, \ee with $c_i$ nonnegative, $L, c_1$ and $c_L$ positive, as well as rules to specify the first $L$ terms of the sequence and a generalization of the non-adjacency constraint to what is a legal decomposition. Unique decompositions exist both here and for other sequences; see \cite{Al, Day, DDKMMV, DDKMV, DG, FGNPT, GT, GTNP, Ke, KKMW, Len, MW1, MW2, Ste1, Ste2, Ze} for a sample of the vast literature on this topic.

The purpose of this paper is to connect generalized Zeckendorf decompositions and Benford's law. First discovered by Newcomb \cite{New} in the 1880s, it was rediscovered by Benford \cite{Ben} approximately fifty years later, who noticed that the distributions of the leading digits of numbers in many data sets were not uniform. In fact, there was a strong bias towards lower values. For example, the leading digit $1$ appeared about $30\%$ of the time and the leading digit $9$ under $5\%$ of the time. Data sets with such leading digit distributions are said to follow Benford's law. More precisely, the probability of a first digit base $B$ of $d$ is $\log_B(1+1/d)$, or more generally the probability that the significand\footnote{If $x>0$ and $B>1$ we may uniquely write $x$ as $S_B(x) \cdot B^{k_B(x)}$, where $S_B(x) \in [1,B)$ is the significand of $x$ and $k_B(x)$ is an integer.} is at most $s$ is $\log_B(s)$. Benford's law appears in astoundingly many data sets, from physical constants to census information to financial and behavioral data, and has a variety of applications (two of the most interesting being its use to detect accounting or voting fraud). This digit bias is in fact quite natural once one realizes that a data set will follow Benford's law if its logarithms modulo 1 are equidistributed.\footnote{Given a data set $\{x_n\}$, let $y_n = \log_{10} x_n \bmod 1$. If $\{y_n\}$ is equidistributed modulo 1 then in the limit the percentage of the time it is in $[\alpha, \beta] \subset [0,1]$ is just $\beta-\alpha$. Letting $\alpha = \log_{10} d$ and $\beta = \log_{10} (d+1)$ implies that the significand of $x$ is $d$ is $\log_{10}(1 + 1/d)$. } See \cite{Hi1,Hi2,MT-B,Rai} for more on the theory of Benford's law, as well as the edited volume \cite{M} for a compilation of articles on its theory and applications.

Before exploring whether or not the summands in Zeckendorf decompositions obey Benford's law, it's natural to ask the question about the sequence of Fibonacci numbers themselves. The answer is yes, and follows almost immediately from Binet's formula, \be F_n \ = \ \frac{5+\sqrt{5}}{10}\left(\frac{1+\sqrt{5}}{2}\right)^n +  \frac{5-\sqrt{5}}{10}\left(\frac{1-\sqrt{5}}{2}\right)^n \ee (note this is slightly different than the standard expression for Binet's formula as we have re-indexed our sequence so that the Fibonaccis begin 1, 2, 3, 5). The proof is completed by showing the logarithms modulo 1 are equidistributed, which is immediate from the irrationality of $\log_{10}(\frac{1+\sqrt{5}}{2})$ and Kronecker's theorem (if $\alpha$ is irrational then $n\alpha$ is equidistributed modulo 1) and simple book-keeping to bound the error of the secondary piece; see \cite{DG, MT-B, Was} for details.

%f[n_] := Simplify[(1/Sqrt[5]) ((1 + Sqrt[5])/2)^(n + 1) - (1/Sqrt[5]) ((1 - Sqrt[5])/2)^(n + 1)] (Mathematica code for checking)

Instead of studying Benfordness of summands in Zeckendorf decompositions, we could instead look at other properties of the summands, such as how often we have an even number or how often they are a square modulo $B$ for some fixed $B$. So long as our sequence has a positive density, our arguments will be applicable.\footnote{For example, in the limit one-third of the Fibonacci numbers are even. To see this we look at the sequence modulo 2 and find it is $1, 0, 1, 1, 0, 1, 1, 0, 1, \dots$; it is thus periodic with period 3 and one-third of the numbers are even.} We quickly review this notion. Given a set of positive integers $\mathcal{G} = \{G_n\}_{n=1}^\infty$ and a subset $S \subset \mathcal{G}$, we let $q(S, n)$ be the fraction of elements of $\mathcal{G}$ with index at most $n$ that are in  $S$:
\begin{align}
q(S,n) \ := \  \frac{\#\{G_i \in S \ : \ 1 \leq i \leq n\}}{n}.
\end{align}
When $\lim_{n \rightarrow \infty}{q(S,n)}$ exists, we define the \textbf{asymptotic density} $q(S)$ as
\begin{align}
q(S) \ := \ \lim_{n \rightarrow \infty}{q(S,n)},
\end{align} and for brevity often say the sequence $S$ has \textbf{density} $q(S)$.

In an earlier work we proved that if a set $S$ has a positive density $d$ in the Fibonaccis, then so too do its summands in the Zeckendorf decompositions, and in particular Zeckendorf decompositions using Fibonacci numbers follow Benford's law \cite{BDEMMTTW}. Our main result below is generalizing these results to the case of a \textbf{positive linear recurrence sequence}, which is a sequence of positive integers $\{G_n\}_{n=1}^{\infty}$ and a set of non-negative coefficients $c_1, \dots, c_L$ with $c_1, c_L > 0$,
\begin{align}\label{eq:recGn}
G_{n+1} \ = \ c_1G_{n}+c_1G_{n-1}+\cdots +c_{L}G_{n+1-L},
\end{align}
and prescribed positive initial terms $G_1, G_2, \dots, G_{L}$.

\begin{thm}\label{thm:mainresult} Fix a positive recurrence $\{G_n\}$.
Let $S\subseteq \{G_n\}_{n=1}^{\infty}$ be a set with positive density $d$, and fix an $\epsilon > 0$. As $n\to\infty$, for an integer $m$ selected uniformly at random from $[0,G_{n+1})$ the proportion of the summands in $m$'s Zeckendorf decomposition which belong to $S$ is within $\e$ of $d$ with probability $1+o(1)$.
\end{thm}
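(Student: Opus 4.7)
The plan is to establish concentration of the total number of summands and of the number of $S$-summands separately, then take their ratio. For $m$ chosen uniformly from $[0, G_{n+1})$, let $X_i(m)$ be the indicator that $G_i$ appears in the generalized Zeckendorf decomposition of $m$, and set $K(m) = \sum_{i \leq n} X_i(m)$ for the total number of summands and $K_S(m) = \sum_{i \leq n,\, G_i \in S} X_i(m)$ for the $S$-summands. The target bound $|K_S(m)/K(m) - d| < \e$ will follow from showing $K(m) = \mu n + o(n)$ and $K_S(m) = d \mu n + o(n)$ in probability, for a positive constant $\mu$ attached to the recurrence.

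First I would compute expectations. Using the legal-decomposition rules for the positive linear recurrence $\{G_n\}$, one counts for each $i \leq n$ the number of $m \in [0, G_{n+1})$ whose decomposition contains $G_i$; this produces a clean formula for $\E[X_i(m)]$ and reproduces the known linear asymptotic $\E[K(m)] = \mu n + O(1)$ from the literature on summand statistics for generalized Zeckendorf expansions. To obtain $\E[K_S(m)] = d \mu n + o(n)$, the per-index weights $\E[X_i(m)]$ stabilize in $i$ to a positive constant, so an Abel summation transfers the density hypothesis $q(S) = d$ through these slowly varying weights.

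The main technical step is a variance estimate $\V[K_S(m)] = O(n)$ (and likewise for $\V[K(m)]$). The $X_i$'s are not independent, since legality forbids certain combinations of summands, so one must control the covariances $\mathrm{Cov}(X_i, X_j)$. Encoding legal decompositions via a transfer matrix expresses these covariances in terms of eigenvalues of an explicit matrix: the dominant eigenvalue produces the main term, while the spectral gap yields exponential decay of $\mathrm{Cov}(X_i, X_j)$ in $|i-j|$, so the double sum is of order $n$. This is the main obstacle, since unlike the pure Fibonacci case treated in our earlier paper, general positive linear recurrences allow elaborate legality rules, and one must verify that the associated transfer matrix is primitive with a uniform spectral gap to extract the required decay of correlations.

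With the mean and variance estimates in hand, Chebyshev's inequality gives $K_S(m)/n \to d\mu$ and $K(m)/n \to \mu$ in probability as $n \to \infty$. Taking the quotient and applying the continuous mapping theorem yields $K_S(m)/K(m) \to d$ in probability, which is exactly the claim that the proportion of $S$-summands lies within $\e$ of $d$ with probability $1 - o(1)$. The Benford corollary then follows by specializing $S$ to the set of $G_i$ with a fixed leading digit, whose density equals the Benford probability by Weyl equidistribution applied to the generalized Binet formula for $\{G_n\}$.
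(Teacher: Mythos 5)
Your proposal follows the same overall skeleton as the paper's argument --- compute means linear in $n$ for both the total summand count and the $S$-summand count, bound the variances, apply Chebyshev to concentrate each separately, and take the ratio --- so the strategy is sound. The genuine difference is in the engine that drives the correlation estimates. You propose a transfer-matrix encoding of legality, with primitivity and a spectral gap yielding exponential decay of $\mathrm{Cov}(X_i,X_j)$ and hence $\V = O(n)$. The paper instead introduces the notion of a \emph{super-legal} decomposition (a legal decomposition whose every block, including the last, is of the ``full'' type), shows the count $H_n$ of such decompositions satisfies the same recurrence as $G_n$, and uses the Generalized Binet Formula for both sequences to get $\p{A_j=k}=p_k(n)(1+o(1))$ uniformly for $\log n<j<n-\log n$; decorrelation is then obtained combinatorially by conditioning on where the block containing $a_i$ ends, which gives $\p{a_j=\l\mid a_i=k}=p_\l(n)(1+o(1))$ for $j>i+\log n$ and only the weaker (but sufficient) bound $\V=o(n^2)$. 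Your route buys a quantitatively stronger variance bound at the cost of having to verify primitivity of the transfer matrix for an arbitrary positive linear recurrence; the paper's route is more elementary and sidesteps that verification entirely. Two points to tighten: (i) you model summands by indicators $X_i\in\{0,1\}$, but for a general positive linear recurrence the coefficients $a_j$ can exceed $1$, and the theorem counts summands with multiplicity ($s(m)=\sum_j a_j$), so your random variables should be the coefficients themselves (both versions do converge to $d$, but yours proves a slightly different statement from the one asserted); (ii) the claimed $O(n)$ variance is asserted rather than derived --- you would still need to actually exhibit the transfer matrix for the block structure of Definition \ref{defn:Legal} and check irreducibility/aperiodicity, which is where the real work would live.
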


We define some concepts needed to prove Theorem \ref{thm:mainresult} in \S\ref{sec:preliminaries}, in particular the notion of a super-legal decomposition. We derive some needed properties of these decompositions, and then prove our main result in \S\ref{sec:proofmainthm} by showing related random variables (the number of summands, and the number of summands in our set with positive density in our recurrence sequence) are strongly concentrated.

%%%%%%%%%%%%%%%%%%%%%%%%%%%%%%%%%%%%%%%%%%%%%%%%%%%%%%%%%%%%%%%%%%%%%%%%%%%%%%%%%%%%%
%%%%%%%%%%%%%%%%%%%%%%%     Subsection: Prelimiaries      %%%%%%%%%%%%%%%%%%%%%%%%%%%
%%%%%%%%%%%%%%%%%%%%%%%%%%%%%%%%%%%%%%%%%%%%%%%%%%%%%%%%%%%%%%%%%%%%%%%%%%%%%%%%%%%%%
\section{Legal and Super-Legal Decompositions}\label{sec:preliminaries}

\textbf{\emph{For the rest of the paper any positive linear recurrence sequence $\{G_n\}_{n=1}^\infty$ satisfies \eqref{eq:recGn} with $c_i \ge 0$ and $L, c_1, c_L \ge 1$.}} \\ \

Let $\{G_n\}_{n=1}^{\infty}$ be a positive linear recurrence sequence. Its the characteristic polynomial is
\begin{align}
f(\lambda)\ = \ c_0\lambda^L+c_1\lambda^{L-1}+c_2\lambda^{L-2}+\cdots+c_{L-1},
\end{align}
with roots $\lambda_1, \dots, \lambda_L$. By the Generalized Binet Formula (for a proof see, for example, Appendix A of \cite{BBGILMT}) we have $\lambda_1$ is the unique positive root, $\lambda_1 > |\lambda_2| \ge \cdots \ge |\lambda_L|$, and there exists an $A > 0$ such that
\begin{align}
G_n\ = \ A\lambda_1^n+O(n^{L-2}\lambda_2^n).
\end{align}

We introduce a few important terms needed to state our results. 

\begin{defi}\label{defn:Legal} Let $\{G_n\}$ be a positive linear recurrence sequence. Given non-negative integers $a_1, \dots, a_n$, the sum $\sum_{i=1}^n{a_iG_{n+1-i}}$ is a \textbf{legal} Zeckendorf decomposition if one of the following conditions holds.
\begin{enumerate}
\item We have $n<L$ and $a_i=c_i$ for $1\leq i\leq n$.
\item There exists an $s\in \{1,\dots,L\}$ such that
\begin{align}
a_1=c_1, \ \ \ a_2=c_2,\ \ \ \cdots \ \ \ , \ \ \ a_{s-1}=c_{s-1},\ \ \ \text{and}\ \ \ a_s<c_s,
\end{align}
$a_{s+1}, \dots, a_{s+\l}=0$ for some $\l\geq 0$, and $\{b_i\}_{i=1}^{n-s-\l}$ with $b_i=a_{s+\l+i}$ is either legal or empty.
\end{enumerate}
\end{defi}

\begin{defi}\label{defn:SuperLegal} Let $\{G_n\}$ be a positive linear recurrence sequence. Given non-negative integers $a_1, \dots, a_n$, the sum $\sum_{i=1}^n{a_iG_{n+1-i}}$ is a \textbf{super-legal} Zeckendorf decomposition if there exists an $s\in \{1, \dots, L\}$ such that
\begin{align}
a_1=c_1,\ \ \ a_2=c_2,\ \ \ \cdots \ \ \ ,\ \ \  a_{s-1}=c_{s-1}, \ \ \ \text{and}\ \ \ a_s<c_s,
\end{align}
$a_{s+1}, \dots, a_{s+\l}=0$ for some $\l\geq 0$, and $\{b_i\}_{i=1}^{n-s-\l}$ with $b_i=a_{s+\l+i}$ is either super-legal or empty.
\end{defi}
In other words, a decomposition is super-legal if it satisfies condition (2) of Definition \ref{defn:Legal}.

\begin{defi} Let $\{G_n\}$ be a positive linear recurrence sequence, and assume that the sum $\sum_{i=1}^n{a_iG_{n+1-i}}$ is a legal Zeckendorf decomposition. We call each string described by one of the conditions of Definition \ref{defn:Legal} (not counting the additional $0$'s) a \textbf{block}, and call the number of terms in each block its \bf{length}.
\end{defi}

We note that every super-legal Zeckendorf decomposition is legal and that a concatenation of super-legal Zeckendorf decompositions makes a super-legal Zeckendorf decomposition.

\begin{exa}\label{exe:recurrenceexample} The recurrence $G_{n+1}=G_n+2G_{n-1}+3G_{n-2}$ with $G_0=G_1=1, G_2=3$ produces the sequence $1,3,8,17,42,100,235,561,\dots$. The decomposition of $1274$ is
\be
1274 \ = \ 561+2(235)+2(100)+42+1\ = \ G_8+2G_7+2G_6+G_5+G_1.
\ee

This example gives coefficients $(1,2,2,1,0,0,0,1)$, so the blocks of $1274$ are $(1,2,2), \ (1,0)$, \ and $(1)$, with lengths $3$, $2$, and $1$ respectively.  Note that even though the definition of a block  excludes the additional $0$'s (i.e., the $a_{s+1}=a_{s+2}=\cdots=a_{s+\l}=0$ from the Definition \ref{defn:Legal}), it is still permissible for a block to end with a $0$. The decomposition for $1274$ is legal but not super-legal, since the final block $(1)$ satisfies condition (1) but not condition (2) from Definition \ref{defn:Legal}. %\fix{THIS IS CONFUSING AND I CAN'T FOLLOW AND HAVE TOO MUCH ELSE TO DO. FROM ABOVE I DIDN'T THINK IT HAD TO SATISFY BOTH CONDITIONS. PLEASE CLARIFY.}

%\fixed{It doesn't have to satisfy both conditions.  It specifically has to satisfy condition 2 in order to be super-legal.  What we are saying here is that it is super legal but not legal, since its final block satisfies condition 1, but not condition 2.}
\end{exa}

\begin{exa} An example of a super-legal decomposition using the recurrence from Example \ref{exe:recurrenceexample} is
	\be
	1277 \ =  \ 561+2(235)+2(100)+42+3+1\ = \ G_8+2G_7+2G_6+G_5+G_2+G_1,
	\ee
	which gives coefficients $(1,2,2,1,0,0,1,1)$. In this case, the final block is $(1,1)$, which satisfies the condition of Definition \ref{defn:SuperLegal}.
\end{exa}

%%%%%%%%%%%%%%%%%%%%%%%%%%%%%%%%%%%%%%%%%%%%%%%%%%%%%%%%%%%%%%%%%%%%%%%%%%%%%%%%%%%%%
%%%%%%%%%%%%%%%%%%%%%%%     Subsection: Super-Legal       %%%%%%%%%%%%%%%%%%%%%%%%%%%
%%%%%%%%%%%%%%%%%%%%%%%%%%%%%%%%%%%%%%%%%%%%%%%%%%%%%%%%%%%%%%%%%%%%%%%%%%%%%%%%%%%%%
%\section{Super-Legal Decompositions}\label{sec:superlegalprop}

Given two legal decompositions, we do not necessarily obtain a new legal sequence by concatenating the coefficients. However, if we require that the leading block be super-legal, we do obtain a new legal decomposition by concatenation. With the help of a few lemmas which help us count the number of super-legal decompositions, we can circumvent this obstruction.

\begin{lem}\label{eq:legal}  Let $\{G_n\}$ be a positive linear recurrence sequence with relation given by \eqref{eq:recGn}, and let $H_n$ be the number of super-legal decompositions using only $G_1,G_2,\dots,G_{n}$. We have
\begin{align}
H_{n+1} \ = \ c_1H_n+c_2H_{n-1}+\cdots+ c_LH_{n+1-L}.
\end{align}
\end{lem}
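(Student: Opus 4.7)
The plan is to set up a bijection between super-legal sequences $(a_1,\dots,a_{n+1})$ of length $n+1$ and triples consisting of a leading-block length $s\in\{1,\dots,L\}$, a value $a_s\in\{0,\dots,c_s-1\}$, and a super-legal sequence of length $n+1-s$ (the tail). Partitioning by $s$ will immediately yield $H_{n+1}=\sum_{s=1}^{L}c_s H_{n+1-s}$. I adopt the convention $H_0:=1$ for the empty sequence, which is consistent with the ``or empty'' clause of Definition~\ref{defn:SuperLegal}; the claimed identity is then understood for $n+1\geq L$, where all the $H_{n+1-s}$ are defined.

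First I would exhibit a canonical parsing: for any super-legal $(a_1,\dots,a_{n+1})$, Definition~\ref{defn:SuperLegal} guarantees a smallest $s\in\{1,\dots,L\}$ with $a_i=c_i$ for $i<s$ and $a_s<c_s$, and this $s$ is unique. The leading block is then forced apart from the value of $a_s$, giving $c_s$ choices. Next I would verify that the tail $(a_{s+1},\dots,a_{n+1})$ is itself super-legal of length $n+1-s$. Definition~\ref{defn:SuperLegal} permits $\ell\geq 0$ explicit zeros before the recursive sub-sequence, but any such leading zeros in the tail can always be absorbed into the tail's own leading block by choosing its parameter $s'=1$ with $a'_{s'}=0$, which is permissible since $c_1\geq 1$. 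Conversely, concatenating any admissible leading block of length $s$ with any super-legal sequence of length $n+1-s$ produces a super-legal sequence of length $n+1$ whose canonical parsing recovers precisely that block (take the definition's $\ell=0$). This establishes the bijection, so the number of super-legal sequences with leading block of length $s$ is exactly $c_s H_{n+1-s}$, and summing over $s\in\{1,\dots,L\}$ proves the lemma.

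The main obstacle is the apparent ambiguity introduced by the existential parameter $\ell$ in Definition~\ref{defn:SuperLegal}: without care, a single super-legal sequence could admit several parsings with different $\ell$, risking over-counting. This is overcome by always choosing $\ell=0$ in the canonical parsing and exploiting the closure of super-legality under prepending zeros (via $s'=1$, $a'_{s'}=0$, using $c_1\geq 1$), so that any trailing zeros that the original definition attaches to the block are instead attributed to the start of the super-legal tail. Once this uniqueness of parsing is secured, the counting step is elementary and the recursion follows.
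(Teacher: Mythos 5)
Your proof is correct and takes essentially the same approach as the paper: both count super-legal sequences by splitting off the leading block of length $s$ (with $c_s$ choices for its last entry) and identifying the remainder, after absorbing any leading zeros, with a super-legal sequence counted by $H_{n+1-s}$. The only cosmetic difference is that you count $H_{n+1}$ directly, allowing $a_1=0$ in a length-one leading block, whereas the paper counts $H_{n+1}-H_n$ (decompositions genuinely using $G_{n+1}$) and therefore uses $c_1-1$ choices in that case; your explicit handling of the $\ell$-ambiguity is a detail the paper leaves implicit, but the combinatorics is identical.
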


\begin{proof}
Note that $H_{n+1}-H_{n}$ is the number of super-legal decompositions with largest element $G_{n+1}$.  We count how many such decompositions there are by summing over the possible lengths of the leading block.  Say the leading block is of length $j$ with $1< j \leq L$.  Then the leading block is $(c_1,c_2,\dots,c_{j-1},a_{j})$, where $a_{j}$ is chosen from $\{0,1,\dots,c_{j}-1\}$.  Therefore there are $c_{j}$ ways of choosing this leading block. Because we require $G_{n+1}$ to be included in the decomposition, if $j=1$ there are $c_1-1$ ways of choosing this leading block, since the leading coefficient must be nonzero.  For any choice of leading block of length $j$, there are $H_{n+1-j}$ ways of choosing the remaining coefficients.  Therefore, we find that
\begin{align}
H_{n+1}-H_n \ = \ \sum_{j=1}^{L}{c_jH_{n+1-j}-H_n},
\end{align}
completing the proof.
\end{proof}

\begin{lem}\label{ratio} Let $\{G_n\}$ be a positive linear recurrence sequence, and let $H_n$ be the number of super-legal decompositions using only $G_1,G_2,\dots,G_{n}$. Then $\lim_{n\to\infty} H_n / G_n$ exists and is positive.
\end{lem}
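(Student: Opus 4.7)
\textit{Plan.} By the preceding lemma, $\{H_n\}$ satisfies the same linear recurrence as $\{G_n\}$. Applying the Generalized Binet Formula (quoted at the start of Section~\ref{sec:preliminaries}) to both sequences yields constants $A_G > 0$ and $A_H \in \mathbb{R}$ with
\[
G_n \;=\; A_G\,\lambda_1^n + O(n^{L-2}|\lambda_2|^n), \qquad
H_n \;=\; A_H\,\lambda_1^n + O(n^{L-2}|\lambda_2|^n).
\]
Since $\lambda_1 > |\lambda_2|$, dividing these expansions gives $H_n/G_n \to A_H/A_G$, so the limit exists, and it suffices to show $A_H > 0$.

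For positivity, my plan is to invoke the Perron--Frobenius theorem on the companion matrix $M$ of the shared recurrence. The matrix $M$ is entrywise non-negative; the assumption $c_1 \ge 1$ produces a self-loop at the top coordinate of the associated directed graph (yielding aperiodicity), while $c_L \ge 1$ ensures strong connectivity (the edge $1 \to L$ reaches the bottom coordinate, and the subdiagonal edges $i \to i-1$ return one to the top). Hence $M$ is primitive, so $\lambda_1$ is a simple dominant eigenvalue with strictly positive left Perron eigenvector $w$. Tracing through the spectral decomposition, the leading Binet coefficient $A_H$ is then a positive scalar multiple of $w^{\top}(H_L, H_{L-1}, \dots, H_1)$. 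Since $w$ is componentwise positive and the initial values are non-negative with $H_L \ge 1$ (for instance, the length-$L$ sequence $(c_1, c_2, \dots, c_{L-1}, 0)$ satisfies condition (2) of Definition~\ref{defn:SuperLegal} with $s=L$, hence is super-legal), this inner product is strictly positive, so $A_H > 0$.

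The main obstacle is precisely this positivity step: a priori, cancellations in the linear functional producing $A_H$ from the initial values could force $A_H = 0$, so that $H_n = o(\lambda_1^n)$ and $H_n/G_n \to 0$. Perron--Frobenius rules this out via the strict positivity of $w$, which prevents any non-negative, nonzero initial vector from lying in the hyperplane $\{v : w^{\top} v = 0\}$. A purely combinatorial alternative would partition legal decompositions of length $n$ by whether they are super-legal or end in a type-(1) terminal block of length $s \in \{1,\ldots,L-1\}$ preceded by a super-legal prefix, expressing $G_{n+1}$ as a bounded sum of the $H_{n-s}$; combined with the monotonicity of $\{H_n\}$ forced by $c_1 \ge 1$, this also yields $H_n \ge c\,G_n$ for some positive constant $c$.
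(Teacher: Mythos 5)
Your proposal is correct, and its first half coincides with the paper's proof: since $\{H_n\}$ satisfies the same recurrence as $\{G_n\}$ (Lemma \ref{eq:legal}), the Generalized Binet Formula gives $H_n = A_H\lambda_1^n + O(n^{L-2}|\lambda_2|^n)$, so the limit exists and equals $A_H/A_G$, reducing everything to $A_H>0$. Where you genuinely diverge is the positivity step. You invoke Perron--Frobenius on the companion matrix: $c_1\ge 1$ and $c_L\ge 1$ make it primitive, the left Perron vector $w$ is strictly positive, and since the initial vector $(H_L,\dots,H_1)$ is non-negative and nonzero (your check that $H_L\ge 1$ via the super-legal string $(c_1,\dots,c_{L-1},0)$ is fine), the leading coefficient $A_H$, being a positive multiple of $w^{\top}(H_L,\dots,H_1)$, is positive. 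This is valid but heavier than what the paper does. The paper's argument is a two-line comparison: since $H_j>0$ for $1\le j\le L$, set $\alpha:=\min_{1\le j\le L} H_j/G_j>0$; because both sequences obey the same recurrence with non-negative coefficients, induction gives $H_n\ge \alpha G_n$ for all $n$, hence $A_H\ge \alpha A_G>0$. The Perron--Frobenius route buys nothing extra here (the spectral facts it delivers, such as $\lambda_1>|\lambda_2|$ and simplicity of $\lambda_1$, are already packaged into the quoted Generalized Binet Formula), and it obliges you to trace the spectral projection to identify $A_H$ as that particular inner product. Your closing sketch of a ``combinatorial alternative'' is in the spirit of the paper's proof, though the paper needs no partition of legal decompositions by terminal block: the inequality $H_n\ge\alpha G_n$ falls out of the shared recurrence alone.
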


\begin{proof}
Since $H_n$ is generated by the same recursion as $G_n$, it has the same characteristic polynomial, which then has the same roots.  Therefore for some $B\geq 0$ we have
\begin{align}
H_n\ = \ B\lambda_1^n+O(n^{L-2}\lambda_2^n).
\end{align}
Thus $\lim_{n\to\infty} H_n / G_n = B/A$ and it suffices to show that $B>0$.  Note that we always have $H_j>0$, so we must have
\begin{align}
\alpha \ := \ \min_{1\leq j\leq L}{\frac{H_j}{G_j}} \ > \ 0.
\end{align}
It follows by induction on $n$ that $H_n\geq \alpha G_n$ for all $n$. Thus we conclude that $B>0$, as desired.
\end{proof}

%%%%%%%%%%%%%%%%%%%%%%%%%%%%%%%%%%%%%%%%%%%%%%%%%%%%%%%%%%%%%%%%%%%%%%%%%%%%%%%%%%%%%
%%%%%%%%%%%%%%%%%%%%%%%%%%%%%%%%%%%%%%%%%%%%%%%%%%%%%%%%%%%%%%%%%%%%%%%%%%%%%%%%%%%%%
%%%%%%%%%%%%%%%%%%%%%%%%%%%%%%%%%%%%%%%%%%%%%%%%%%%%%%%%%%%%%%%%%%%%%%%%%%%%%%%%%%%%%
%%%%%%%%%%%%%%%%%%%%%%%%%%%%%%%%%%%%%%%%%%%%%%%%%%%%%%%%%%%%%%%%%%%%%%%%%%%%%%%%%%%%%

\section{Density Theorem}\label{sec:proofmainthm}

%\fix{THIS SECTION IS VERY HARD TO READ. NOWHERE DO WE HAVE 'PROOF OF Theorem \ref{thm:mainresult}'. WE HAVE A COLLECTION OF RESULTS. YOU WANT TO BE CLEAR, YOU WANT TO GUIDE THE READER, YOU WANT TO SHOW PEOPLE WHAT IS GOING ON. CONNECT TO EARLIER. GOTTA BOARD PLANE!}

To prove the main result as stated in Theorem \ref{thm:mainresult}, we compute expected values and variances of the relevant random variables.  An essential part of the ensuing analysis is the following estimate on the probability that $a_j=k$ for a fixed $k$, and showing that it has little dependence on $j$.  We prove the theorem via casework based on the structure of the blocks in the decomposition of $m$. Namely, in the case that $a_j$ is in the $r$th position of a block of length $\l$, the two subcases are $r=\l$ (that is, $a_j$ is the last element in the block) or $r<\l$ (that is, $a_j$ is not the last element in the block).  This is why the notion of a super-legal decomposition is useful; if we want to know whether the legal decomposition $(a_1,a_2,\dots,a_n)$ has a block that terminates at $a_r$, this is equivalent to whether $(a_1,a_2,\dots,a_r)$ forms a super-legal decomposition. So, we first prove some useful lemmas and then collect our results to prove Theorem \ref{thm:mainresult}.

\begin{lem}\label{pk} Let $\{G_n\}$ be a positive linear recurrence sequence, and choose an integer $m$ uniformly at random from $[0,G_{n+1})$, with  legal decomposition
\begin{align}
m \ = \ \sum_{j=1}^{n}{a_jG_{n+1-j}}.
\end{align}
Note that this defines random variables $A_1, \dots, A_n$ taking on values $a_1,\dots,a_n$.

Let $p_{j,k}(n) := \p{A_j=k}.$ Then, for $\log{n}<j<n-\log{n}$, we have
\begin{align}
p_{j,k}(n) \ = \ p_k(n)(1+o(1)),
\end{align}
where $p_k(n)$ is computable and does not depend on $j$.
\end{lem}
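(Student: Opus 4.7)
The plan is to compute
\[
f(j,k,n) \ := \ \#\{\,(a_1,\ldots,a_n)\text{ legal}: a_j = k\,\}
\]
exactly by conditioning on the block nearest position $j$, and then extract the leading asymptotic so that $p_{j,k}(n) = f(j,k,n)/G_{n+1}$ is a $j$-independent quantity up to a multiplicative $(1+o(1))$ factor that is uniform in $j$ on $\log n < j < n - \log n$.

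To enumerate $f(j,k,n)$, I condition on the starting index $s$ and length $\ell$ of the block of the decomposition ``closest to'' position $j$. Three sub-cases cover all legal decompositions with $\log n < j < n - \log n$: (a) $a_j$ lies strictly inside the block, so $a_j = c_{j-s+1}$ is forced, contributing only when $k = c_{j-s+1}$, and the block's terminal coefficient ranges freely over $\{0, 1, \ldots, c_\ell - 1\}$ (a multiplicity factor of $c_\ell$); (b) $a_j$ is the terminal coefficient of the block (so $j = s + \ell - 1$), contributing only when $k < c_\ell$; and (c), relevant only when $k=0$, $a_j$ lies in the run of zeros immediately following the block, with no intervening new block. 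In each case the prefix $(a_1, \ldots, a_{s-1})$ is a super-legal decomposition and thus contributes $H_{s-1}$ choices (with $H_0 := 1$), while the suffix after the block and its zero-run is an arbitrary legal decomposition of appropriately shifted length, contributing a single $G_{\cdot}$ factor. Summation collapses to a finite sum (indexed by $r \in \{1,\ldots,L\}$ for the sub-position within the block and $\ell \in \{1,\ldots,L\}$ for the block length) of terms of the shape $M(k,r,\ell) \cdot H_{j-r} \cdot G_{n - j + r - \ell + 1}$, where $M(k,r,\ell)$ is a simple multiplier depending only on $k$, $r$, $\ell$, and the recurrence coefficients. The restriction $j < n - \log n$ rules out the nuisance case where position $j$ lies inside a terminal ``incomplete block'' (satisfying only condition (1) of Definition \ref{defn:Legal}), since such blocks sit within the last $L < \log n$ positions.

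Next I invoke the Generalized Binet formula together with Lemma \ref{ratio}: $G_m = A\lambda_1^m + O(m^{L-2}|\lambda_2|^m)$ and $H_m = B\lambda_1^m + O(m^{L-2}|\lambda_2|^m)$ with $A, B > 0$. For any shift pair $(a, b)$ with $\min(a, b) \geq \log n - O(1)$, these give
\[
H_a \cdot G_b \ = \ AB\, \lambda_1^{a + b}\,(1 + o(1))
\]
with error bounded by $O((\log n)^{L-2}(|\lambda_2|/\lambda_1)^{\log n}) = o(1)$, uniformly in $(a, b)$. The hypothesis $\log n < j < n - \log n$ is precisely what forces every index appearing in the finite sum above to be at least $\log n - O(1)$. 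Collecting terms yields
\[
f(j, k, n) \ = \ AB\, \lambda_1^{n+1}\, C(k)\, (1 + o(1)), \qquad C(k) \ := \ \sum_{r, \ell} M(k, r, \ell)\, \lambda_1^{-\ell},
\]
in which $C(k)$ depends only on $k$ and the recurrence. Dividing by $G_{n+1} = A\lambda_1^{n+1}(1 + o(1))$ gives $p_{j,k}(n) = B\, C(k)(1 + o(1))$, so I may take $p_k(n) := B\, C(k)$.

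I expect the main obstacle to be case (c), where $k = 0$ and $a_j$ lies in a zero-run between consecutive blocks: the parameterization must uniquely identify the block producing this zero-run and avoid double-counting decompositions in which several consecutive blocks happen to have various lengths. The cleanest approach is to condition on the endpoint $T < j$ of the last block preceding $j$, require $a_{T+1} = \cdots = a_j = 0$, and use the identity that legal decompositions with a prescribed initial run of $t$ zeros are in bijection with legal decompositions of length reduced by $t$. Once case (c) is formalized this way, it contributes a third term to $C(0)$ of precisely the same asymptotic shape as (a) and (b), and the uniform error analysis carries over verbatim.
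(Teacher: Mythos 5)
Your proposal follows essentially the same route as the paper's proof: condition on the position $r$ and the length $\ell$ of the block containing $a_j$, count the admissible prefixes by $H_{j-r}$ (super-legal initial segments, via Lemma \ref{ratio}) and the suffixes by a single $G_{n-j-\ell+r+1}$ factor, and then apply the Generalized Binet formula to extract a $j$-independent main term with relative error $o(1)$ uniformly on $\log n < j < n-\log n$. The one substantive difference is your case (c): the paper's casework tacitly assumes $a_j$ lies inside some block and never accounts for $a_j$ sitting in the inter-block run of zeros, so your explicit handling of that contribution is a genuine repair of the $k=0$ case rather than a redundancy --- though the omission is harmless downstream, since $p_k(n)$ is only ever used in sums weighted by $k$.
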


\begin{proof}
We divide the argument into cases based on the length of the block containing $a_j$, as well as the position $a_j$ takes in this block.  Suppose that $a_j$ is in the $r$th place in a block of length $\l$.  In order to have $a_j=k$, we must either have $r<\l$ and $k=c_r$, or $r=\l$ and $k<c_r$.

In the former case, there are $c_{\l}$ ways to choose the terms in the block containing $a_j$, due to the $c_{\l}$ choices there are for the final term, and everything else is fixed.  There are $H_{j-r}$ ways to choose the coefficients for the terms greater than those in the block containing $a_j$, and $G_{n-j-\l+r+1}$ ways to choose the smaller terms.

We now consider the latter case, where $r=\l$ and $k<c_r$.  There is now only one possibility for the coefficients in the block containing $a_j$, but the rest of the argument remains the same as in the previous case.  Therefore, by Lemma \ref{ratio} we find that
\begin{align}\label{eq:expansionNjklr}
N_{j,k,\l,r}(n) & \ := \ \#\{m\in \mathbb{Z}\cap [0,G_{n+1}):a_j=k, \ a_j \ \text{is in the} \ r^{th} \text{ position in a block of length} \ \l\} \nonumber \\
& \ = \ \left\{\begin{array}{ll}
c_{\l}G_{n-j-\l+r+1}H_{j-r} & \text{ if }r<\l, \ k=c_r, \\
G_{n-j-\l+r+1}H_{j-r} & \text{ if }r=\l, \ k<c_r, \\
0 & \text{ otherwise }
\end{array}\right. \nonumber \\
& \ = \ N_{k,\l,r}(n)(1+o(1)),
\end{align}
where
\begin{align}
N_{k,\l,r}(n) \ := \ \left\{\begin{array}{ll}
c_{\l}AB\lambda_1^{n-\l+1} & \text{ if }r<\l, \ k=c_r, \\
AB\lambda_1^{n-\l+1} & \text{ if }r=\l, \ k<c_r, \\
0 & \text{ otherwise, }
\end{array}\right.
\end{align}
and $N_{k,\l,r}(n)$ does not depend on $j$; these formulas follow from applications of the Generalized Binet Formula to the sequences for the $G_n$'s and $H_n$'s.  We conclude the proof by noting that
\begin{align}
p_{j,k}(n) \ = \ \frac{1}{G_{n+1}}\sum_{\l=1}^{L}{\sum_{r=1}^{\l}{N_{j,k,\l,r}(n)}} \ = \ \left(\frac{1}{G_{n+1}}\sum_{\l=1}^{L}{\sum_{r=1}^{\l}{N_{k,\l,r}(n)}}\right) \cdot \left(1 + o(1)\right),
\end{align} where we used \eqref{eq:expansionNjklr} to replace $N_{j,k,\l,r}(n)$.  The claim now follows by defining
\begin{align}
p_k(n) \ := \ \frac{1}{G_{n+1}}\sum_{\l=1}^{L}{\sum_{r=1}^{\l}{N_{k,\l,r}(n)}}
\end{align} and noting that its size is independent of $j$. More is true, as the Generalized Binet Formula again gives us that $G_{n+1}$ is a constant times $\lambda_1^{n+1}$ (up to lower order terms), and similarly  the sum for $p_k(n)$ is a multiple of $\lambda_1^{n+1}$ plus lower order terms.
\end{proof}

%%%%%%%%%%%%%%%%%%%%%%%%%%%%%%%%%%%%%%%%%%%%%%%%%%%%%%%%%

We also use the following result, which follows immediately from Theorems 1.2 and 1.3 in \cite{MW1} (see also \cite{MW2} for a survey on the subject). %\fix{DID WE PROVE THAT IN GENERAL OR JUST FOR THE FIBONACCI NUMBERS? OR DOES THE PROOF IMMEDIATELY FOLLOW? IS IT WORTH INCLUDING THE GENERALIZATION IN AN APPENDIX FOR COMPLETENESS?}

\begin{lem} \label{lem:VarXn} Let $\{G_n\}$ be a positive linear recurrence sequence, with $s(m)$ the number of summands in the decomposition of $m$.  That is, for $m=\sum_{j=1}^n{a_jG_{n+1-j}}$, let $s(m):=\sum_{j=1}^n{a_j}$.  Let $X_n(m)$ be the random variable defined by $X_n(m)=s(m)$, where $m$ is chosen uniformly at random from $[0,G_{n+1})$. Then %\fix{NOTATION: DO WE MEAN $X_n(m)$ EQUALS $s(m)$?}
%\fixed{$m$ is itself a random variable which we introduced in lemma 3.1 (which is actually dependent on $n$, perhaps we should say $m_n$) so we are defining a new random variable $X_n(m)$ by $X_n(m)=s(m)$}  Then, for some constant $C>0$, we have
\begin{align}
\E[X_n(m)] \ = \  nC+o(n) \quad  \text{and } \quad \V[X_n(m)] \ = \  o(n^2).
\end{align}
\end{lem}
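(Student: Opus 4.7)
The plan is to write $X_n(m) = \sum_{j=1}^n A_j$ where each $A_j$ is the coefficient random variable analyzed in Lemma \ref{pk}, then compute the expectation by linearity and bound the variance by controlling pairwise correlations of the $A_j$. The key input throughout is Lemma \ref{pk} (and a two-point analogue of it); the asymptotics for $G_n$ from the Generalized Binet Formula and for $H_n$ from Lemma \ref{ratio} will then let us extract clean limits.

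For the expectation, linearity yields $\E[X_n(m)] = \sum_{j=1}^n \mu_j(n)$ with $\mu_j(n) := \sum_k k\, p_{j,k}(n)$. By Lemma \ref{pk}, for $\log n < j < n - \log n$ we have $\mu_j(n) = \mu(n)(1+o(1))$ where $\mu(n) := \sum_k k\, p_k(n)$ is independent of $j$. The at most $2\log n$ boundary indices contribute only $O(\log n)$ since each $a_j \le \max_i c_i$. The explicit form of $p_k(n)$ from Lemma \ref{pk}, combined with the Generalized Binet expansions $G_n = A\lambda_1^n + O(n^{L-2}\lambda_2^n)$ and $H_n = B\lambda_1^n + O(n^{L-2}\lambda_2^n)$, shows that $p_k(n)$ tends to a limit $p_k^{*}$; hence $\mu(n) \to C := \sum_k k\, p_k^{*}$, and $\E[X_n(m)] = nC + o(n)$.

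For the variance, my plan is to prove $\E[X_n^2] = C^2 n^2 + o(n^2)$ and subtract. Expanding $\E[X_n^2] = \sum_{i,j} \E[A_i A_j]$, the diagonal contributes $O(n)$ since each $A_j$ is bounded. For off-diagonal pairs with $\log n < i < j < n - \log n$ and $j - i$ sufficiently large, I will prove a two-point analogue of Lemma \ref{pk}: the joint probability $\p{A_i = k,\, A_j = k'}$ factors as $p_k(n)\, p_{k'}(n)(1+o(1))$. The counting mirrors the proof of Lemma \ref{pk}: condition on the block containing $A_i$ and the block containing $A_j$, which splits the remaining coefficients into three independent super-legal segments counted by $H$- or $G$-quantities, and use Lemma \ref{ratio} to factor the ratios cleanly. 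Summing over $(i,j)$ gives $\E[X_n^2] = n^2 C^2 + o(n^2)$, so $\V[X_n(m)] = o(n^2)$.

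The main obstacle is executing the two-point estimate uniformly: one has to enumerate cases for where $A_i$ and $A_j$ sit within their respective blocks, handle the case $j-i = O(L)$ separately (contributing only $O(n)$ total pairs and hence $o(n^2)$ by trivial boundedness), and control all errors uniformly in $(i,j)$. Since we only need $o(n^2)$ rather than the sharp Gaussian variance $n\sigma^2 + O(1)$, fairly coarse bounds suffice, which is what makes the approach tractable. Alternatively, and more cheaply, one can simply invoke Theorems 1.2 and 1.3 of \cite{MW1}, which use a generating-function approach to establish full asymptotic Gaussianity of $X_n(m)$ with mean $nC + O(1)$ and variance $n\sigma^2 + O(1)$, immediately implying the lemma.
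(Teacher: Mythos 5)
Your proposal is correct, and its closing sentence is in fact exactly what the paper does: the paper offers no proof of Lemma \ref{lem:VarXn} at all, stating only that it ``follows immediately from Theorems 1.2 and 1.3 in \cite{MW1}.'' Your primary, self-contained route is therefore a genuinely different path from the paper's, though not from the paper's methods: it is essentially the $d=1$ specialization of the argument the paper gives for $Y_n(m)$ (linearity of expectation over the $A_j$ via Lemma \ref{pk}, an $O(\log n)$ boundary contribution, and a decorrelation estimate $\p{A_j=\l \mid A_i=k}=p_\l(n)(1+o(1))$ for $j>i+\log n$ obtained by conditioning on where the block containing $A_i$ ends). What your route buys is worth noting: the paper's computation of $\E[Y_n(m)]$ actually leans on $\E[X_n(m)]=nC+o(n)$ from the cited lemma to identify the constant $C$, whereas you define $C=\sum_k k\,p_k^{*}$ intrinsically from the limits of the $p_k(n)$, which removes the external dependence and would let the whole paper stand on Lemmas \ref{eq:legal}, \ref{ratio}, and \ref{pk} alone; the cost is that you must carry out the two-point block-counting estimate uniformly, which is routine but tedious. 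Two small points to watch if you execute the direct route: the off-diagonal pairs you must discard are those with $j-i\le \log n$ (an $O(n\log n)=o(n^2)$ contribution), not merely $j-i=O(L)$; and the proof of Theorem \ref{thm:mainresult} divides by $nC$, so you should record why $C>0$ (some $p_k^{*}$ with $k\ge 1$ is positive since $c_1,c_L\ge 1$), a fact the citation to \cite{MW1} supplies for free.
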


%%%%%%%%%%%%%%%%%%%%%%%%%%%%%%%%%%%%%%%%%%%%%%%%%%%%%%%%%
We define another random variable similarly.

\begin{lem} Let $\{G_n\}$ be a positive linear recurrence sequence, and let $S\subseteq \{G_n\}$ be a set with positive density $d$ in $\{G_n\}$. For $m$ chosen uniformly at random in $[0, G_{n+1})$, let
\begin{align}
Y_n(m) \ := \ \sum_{j\in T_n}{a_j},
\end{align} where  $T_n=\{j\leq n|G_{n+1-j}\in S\}$.
%\fix{WHAT IS $S$? IS THIS A SET OF DENSITY? IF YES SAY SO. IF YES DOES IT HAVE DENSITY $d$?}
%\fix{NOTATION: DO WE MEAN $Y_n(m)$ EQUALS THIS SUM?}
%\fixed{Added in specification of what $S$ is.}
Then, for some constant $C>0$, we have
\begin{align}
\E[Y_n(m)] \ = \ dnC+o(n), \ \ \ \V[Y_n(m)] \ = \ o(n^2).
\end{align}
\end{lem}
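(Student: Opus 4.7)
Plan: The strategy is to compute $\E[Y_n]$ and $\V[Y_n]$ separately, in each case reducing to the marginal analysis of Lemma \ref{pk} together with the global estimate for $X_n$ from Lemma \ref{lem:VarXn}, and extending the counting argument of Lemma \ref{pk} to a joint two-coordinate version for the variance.

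For the expected value, write $\E[Y_n(m)] = \sum_{j \in T_n} \E[A_j]$ and split $T_n$ into its interior $T_n \cap (\log n,\, n - \log n)$ and a boundary of size at most $2\log n$. Because each $A_j$ is bounded by $\max_i c_i$, the boundary contribution is $O(\log n) = o(n)$. For $j$ in the interior, Lemma \ref{pk} gives $\E[A_j] = \mu_n(1+o(1))$ with $\mu_n := \sum_k k\, p_k(n)$ independent of $j$. Applying the same split to $X_n$ and invoking Lemma \ref{lem:VarXn} yields $n \mu_n = nC + o(n)$, so $\mu_n = C + o(1)$. The density hypothesis $|T_n|/n \to d$, after discarding the boundary indices, gives $|T_n \cap (\log n,\, n - \log n)| = dn + o(n)$, and combining these estimates produces $\E[Y_n(m)] = dnC + o(n)$.

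For the variance, expand
\begin{align*}
\V[Y_n(m)] \ = \ \sum_{j, j' \in T_n} \mathrm{Cov}(A_j, A_{j'}),
\end{align*}
and stratify pairs by $|j - j'|$ and by proximity to the boundary. There are only $O(n \log n)$ pairs with $|j - j'| \leq \log n$ or with one index in the boundary, and each covariance is $O(1)$ by boundedness of $A_j$, so these contribute $o(n^2)$ in total. For the remaining pairs I would establish a joint analog of Lemma \ref{pk}: for $\log n < j < j' < n - \log n$ with $j' - j > \log n$,
\begin{align*}
\p{A_j = k,\ A_{j'} = k'} \ = \ p_k(n)\, p_{k'}(n)(1 + o(1)).
\end{align*}
Summing against $kk'$ gives $\E[A_j A_{j'}] = \mu_n^2(1 + o(1))$, while $\E[A_j] \E[A_{j'}] = \mu_n^2(1 + o(1))$, so each such covariance is $o(1)$ uniformly over these pairs. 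They therefore contribute at most $n^2 \cdot o(1) = o(n^2)$, and $\V[Y_n(m)] = o(n^2)$ follows.

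The main technical hurdle is establishing the joint factorization. The proof mirrors Lemma \ref{pk}: one classifies pairs $(j, j')$ by the lengths of the blocks containing them and by their internal positions, then counts legal decompositions matching a prescribed configuration at the two positions by splitting the decomposition into three ranges (above, between, and below the two fixed blocks) and applying Lemma \ref{ratio} together with the Generalized Binet Formula to each range. The gap $j' - j > \log n$ is precisely what absorbs the polynomial errors from the Binet expansion into the $1 + o(1)$, and the principal difficulty is the casework for configurations in which a block at one position nearly reaches the other, so that the intermediate segment is short and the independence of the two regions must be argued carefully. Once the joint lemma is in place, the variance bound reduces to routine bookkeeping.
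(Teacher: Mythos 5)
Your proposal is correct and follows essentially the same route as the paper: split off the $O(\log n)$ boundary indices, use Lemma \ref{pk} and the density of $T_n$ for the mean, and decouple distant coordinate pairs for the second moment. The paper implements your ``joint analog of Lemma \ref{pk}'' in conditional form, writing $\p{a_j=\l\mid a_i=k}=\sum_{r}q_{i,r}(n)\,p_{j-i-r,\l}(n)=p_\l(n)(1+o(1))$ where $q_{i,r}(n)$ is the probability that the block containing $a_i$ ends at $a_{i+r}$ --- this is exactly the three-range counting you sketch, and it handles the casework you flag as the main technical hurdle.
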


%\fix{MORE DETAIL IS NEEDED HERE. I ASSUME THE DENSITY $d$ IS POSITIVE? ARE WE LOOKING AT A SPECIAL SEQUENCE? IS THAT WHAT $S$ IS? CAN YOU BE CLEARER AS TO WHAT IS BEING STUDIED. THIS LEMMA IS HARD TO READ.}
%\fixed{It certainly isn't interesting in the case $d=0$, but the argument works in this case as well.  At this point $S$ doesn't have to be anything specific.  We are in the process of proving theorem 1.1, where the only restriction on $S$ is that it has density $d$, $d>0$.}

\begin{proof}
We first compute the expected value. %\fix{THIS IS A BIT HARD TO FOLLOW. ARE THE $a_j$'s RANDOM VARIABLES OR CONSTANTS? }
%\fixed{The $a_j$'s are random variables.  I added a note about this in lemma 3.1 where they are introduced.}
We have %\fix{ADD TEXT AFTERWARDS SAYING WHERE THE LOG COMES FROM IN THE ESTIMATION. WITH THE $k p_{j,k}$ IS THERE A DANGER THAT $n$ MULTIPLIES $\log n$ AND GIVES $n\log n$? THIS IS WHY YOU NEED A BIT MORE DETAIL.}
\begin{align}
\E[Y_n(m)] \ & = \ \E\left[\sum_{j\in T_n}{a_j}\right] \ = \
\sum_{j\in T_n}{\E[a_j]} \ = \ \sum_{j\in T_n}{\sum_{k=1}^{\infty}{kp_{j,k}(n)}} \nonumber \\
& = \ O(\log{n})+\sum_{j\in T_n}{\sum_{k=1}^{\infty}{kp_k(n)(1+o(1))}}. \nonumber \\
& = \ O(\log{n})+dn(1+o(1))\sum_{k=1}^{\infty}{kp_k(n)} \nonumber \\
& = \ O(\log{n})+d(1+o(1))\sum_{j=1}^n{\sum_{k=1}^{\infty}{kp_k(n)}} \nonumber \\
& = \ O(\log{n})+d(1+o(1))\sum_{j=1}^n{\sum_{k=1}^{\infty}{kp_{j,k}(n)}} \nonumber \\
& = \ O(\log{n})+d(1+o(1))\sum_{j=1}^n{\E[a_j]} \ = \ O(\log{n})+\E[X_n(m)]d(1+o(1)) \nonumber \\
& = \ dnC+o(n).
\end{align}
Note that the above sums are actually finite, since $p_{j,k}=p_k=0$ for sufficiently large $k$.  The $\log{n}$ term  appears since Lemma \ref{pk} only allows us to say $p_{j,k}=p_k(1+o(1))$ when $\log{n}<j<n-\log{n}$.
%\fix{IS  THE POINT THAT WE PASS FROM A RESTRICTED SUM OVER $j\in T_n$ TO A SUM OVER ALL $j$?}  \fixed{Yes, and things were written in a very cumbersome way.  I have condensed them, and added a few details.}
We now must consider the variance.  First note that if $i+\log{n}<j$, then letting
\begin{align}q_{i,r}(n):=\p{\text{the block containing} \ a_i \ \text{ends at} \ a_{i+r}|a_i=k},
\end{align}
we have
\begin{align}
\p{a_j=\l|a_i=k} \ &= \ \sum_{r=1}^{L-1}{q_{i,r}(n)p_{j-i-r,\l}(n)} \nonumber \\
&= \ (1+o(1))p_{\l}(n)\sum_{r=1}^{L-1}{q_{i,r}(n)} \nonumber \\
&= \ p_{\l}(n)(1+o(1)).
\end{align}
Thus, we compute
\begin{align}
\E[Y_n(m)^2] \ & = \ \E\left[\sum_{i,j\in T_n}{a_ia_j}\right] \  = \
\sum_{i,j\in T_n}{\E[a_ia_j]} \nonumber \\
& = \ \sum_{i,j\in T_n}{\sum_{k,\l=1}^{\infty}{k\l p_{i,k}(n)\p{a_j=\l|a_i=k}}} \nonumber \\
& = \ O(n\log{n})+2\sum_{\substack{i,j\in T_n \\ 2\log{n}<i+\log{n}<j<n-\log{n}}}{\sum_{k,\l=1}^{\infty}{k\l p_{i,k}(n)\p{a_j=\l|a_i=k}}} \nonumber \\
& \leq \ O(n\log{n})+2\sum_{\substack{i,j\in T_n \\ 2\log{n}<i+\log{n}<j<n-\log{n}}}{\sum_{k,\l=1}^{\infty}{k\l p_k(n)p_{\l}(n)(1+o(1))}} \nonumber \\
& = \ O(n\log{n})+(1+o(1))d^2n^2\sum_{k,\l=1}^{\infty}{k\l p_k(n)p_{\l}(n)} \nonumber \\
& = \ O(n\log{n})+(1+o(1))d^2n^2\left(\sum_{k=1}^{\infty}{k p_k(n)}\right)^2 \nonumber \\
& = \ O(n\log{n})+d^2n^2C^2(1+o(1)) \ = \ d^2n^2C^2+o(n^2).
\end{align}
Thus
\begin{align}
\V[Y_n(m)]\ = \ \E[Y_n(m)^2]-\E[Y_n(m)]^2 \ = \ o(n^2),
\end{align} completing the proof.
\end{proof}

%%%%%%%%%%%%%%%%%%%%%%%%%%%%%%%%%%%%%%%%%%%%%%%%%%%%%%%%%

We are now ready to prove our main result. The idea of the proof is that the results above strongly concentrate $Y_n(m)$ and $X_n(m)$.

%\medskip
%\noindent \textbf{Theorem \ref{thm:mainresult}.} \textit{Fix a positive recurrence $\{G_n\}$.
%Let $S\subseteq \{G_n\}_{n=1}^{\infty}$ be a set with density $d$, $d>0$, and fix an $\epsilon > 0$. As $n\to\infty$, for an integer $m$ selected uniformly at random from $[0,G_{n+1})$ the proportion of the summands in $m$'s Zeckendorf decomposition which belong to $S$ is within $\e$ of $d$ with probability $1+o(1)$.}

\begin{proof}[Proof of Theorem \ref{thm:mainresult}]
Note that the proportion of the summands in $m$'s Zeckendorf decomposition which belong to $S$ is $\frac{Y_n(m)}{X_n(m)}$, where $X_n(m),Y_n(m)$ are defined as in the previous lemmas. Therefore it suffices to show that for any $\e>0$, with probability $1+o(1)$ we have
\begin{align}
\left|\frac{Y_n(m)}{X_n(m)}-d\right|\ < \ \e.
\end{align}
By Chebyshev's inequality, letting $g(n)=n^{1/2}\V[X_n(m)]^{-1/4}$, we obtain
\begin{align}
\p{|X_n(m)-\E[X_n(m)]|>\frac{\E[X_n(m)]}{g(n)}} \ \leq\  \frac{\V[X_n(m)]g(n)^2}{\E[X_n(m)]^2}\ = \ o(1).
\end{align}
Letting
\begin{align}
e_1(n) \ := \ \frac{1}{nC}\left(\frac{\E[X_n(m)]}{g(n)}+\left|\E[X_n(m)]-nC\right|\right),
\end{align}
we have with probability $1+o(1)$ that
\begin{align}
nC(1-e_1(n))\ \leq \ X_n(m) \ \leq \ Cn(1+e_1(n)).
\end{align}
Note that $e_1(n)=o(1)$.  A similar argument for $Y_n(m)$ shows that there exists some $e_2(n)=o(1)$ such that with probability $1+o(1)$ we have
\be
dnC(1-e_2(n))\ \leq \ Y_n(m) \ \leq \ dnC(1+e_2(n)).
\ee
Therefore, we have that
\begin{align}
\frac{Y_n(m)}{X_n(m)}\ \leq\ \frac{dnC(1+e_2(n))}{nC(1-e_1(n))}<d+\e,
\end{align}
with probability $1+o(1)$, and we can similarly obtain
\begin{align}
\frac{Y_n(m)}{X_n(m)} \ > \  d-\e.
\end{align}
Thus we conclude that with probability $1+o(1)$
\begin{align}
\left|\frac{Y_n(m)}{X_n(m)}-d\right| \ < \ \e,
\end{align}
completing the proof.
\end{proof}

%%%%%%%%%%%%%%%%%%%%%%%%%%%%%%%%%%%%%%%%%%%%%%%%%%%%%%%%%%%%%%%%%%%%%%%%%%%%%%%%%%%%%
%%%%%%%%%%%%%%%%%%%%%%%%%%%%%%%%%%%%%%%%%%%%%%%%%%%%%%%%%%%%%%%%%%%%%%%%%%%%%%%%%%%%%
%%%%%%%%%%%%%%%%%%%%%%%%%%%%%%%%%%%%%%%%%%%%%%%%%%%%%%%%%%%%%%%%%%%%%%%%%%%%%%%%%%%%%
%%%%%%%%%%%%%%%%%%%%%%%%%%%%%%%%%%%%%%%%%%%%%%%%%%%%%%%%%%%%%%%%%%%%%%%%%%%%%%%%%%%%%

\section{Conclusion and Future Work}

We were able to handle the behavior of Zeckendorf decompositions in fairly general settings by cleverly separating any decomposition into manageable blocks. The key step was the notion of a super-legal decomposition, which simplified the combinatorial analysis of the generalized Zeckendorf decompositions significantly. This allowed us to prove not just Benford behavior for the leading digits, but also similar results for other sequences with positive density.

We obtained results for a large class of linear recurrences by considering only the main term of Binet's formula for each linear recurrence. In future work we plan on revisiting these problems for other sequences. Obvious candidates include far-difference representations \cite{Al,DDKMV}, $f$-decompositions \cite{DDKMMV}, and recurrences with leading term zero (some of which do not have unique decompositions) \cite{CFHMN1,CFHMN2}.

\ \\

\end{document}